\newtheorem{theorem}{Theorem}[section]
\newtheorem{proposition}[theorem]{Proposition}
\renewcommand{\leq}{\leqslant}
\renewcommand{\geq}{\geqslant}
\theoremstyle{definition}
\theoremstyle{definition}
\newtheorem{remark}[theorem]{Remark}
\numberwithin{equation}{section}
\numberwithin{equation}{section} \numberwithin{figure}{section}
\author{Yuguang Zhang}
 \thanks{The  author is supported  by   the  Simons Foundation's program  Simons Collaboration on Special Holonomy in Geometry, Analysis and Physics.}
\address{University of Bath}
\email{yuguangzhang76@yahoo.com}
\title{Note on  equivalences  for degenerations of Calabi-Yau manifolds}
\begin{document}
\begin{abstract}
This note  studies  the equivalencies  among  convergences  of Ricci-flat  K\"{a}hler-Einstein  metrics
on  Calabi-Yau manifolds,  cohomology classes and potential functions.
\end{abstract}
\maketitle

\section{Introduction }
A Calabi-Yau manifold $X$   is a   simply connected complex  projective manifold with trivial canonical bundle $\mathcal{K}_{X}\cong \mathcal{O}_{X}$, and the Hodge numbers  $h^{n,0}(X)=1 $, $h^{i,0}(X)=0$, $0<i<n$.  A polarized Calabi-Yau manifold $(X,L)$ is a Calabi-Yau manifold $X$ with an ample line bundle $L$.

 In  \cite{Ya1}, S.-T. Yau  proved the  Calabi's conjecture
 which asserts the existence of Ricci-flat K\"{a}hler-Einstein  metrics on Calabi-Yau manifolds.
More explicitly,
  if $X$ is a Calabi-Yau manifold, then
 for  any K\"{a}hler class $\alpha\in H^{1,1}(X, \mathbb{R})$,
there exists a
     unique Ricci-flat K\"{a}hler-Einstein  metric
     $\omega\in\alpha$, i.e. $${\rm Ric}(\omega)\equiv 0.$$
The  Riemannian holonomy group of such metric  is   $SU(n)$.  Conversely, one can show that  a simply connected compact   Riemannian manifold with holonomy group   $SU(n)$ is a  Calabi-Yau manifold in  our definition, and the metric is a  Ricci-flat K\"{a}hler-Einstein  metric  (cf.  \cite{Yau2,GHJ}).

 A degeneration of  Calabi-Yau $n$-manifolds $\pi: \mathcal{X}\rightarrow \Delta$  is a flat morphism from a  variety $\mathcal{X}$ of dimension $n+1$  to a disc $ \Delta\subset \mathbb{C}$ such that for any $t\in \Delta^{*}=\Delta \backslash\{0\}$,  $X_{t}=\pi^{-1}(t)$ is a Calabi-Yau manifold,  and  the central fiber   $X_{0}=\pi^{-1}(0)$ is singular. If  there is a  relative  ample line bundle  $\mathcal{L}$   on $\mathcal{X}$, we call it a  degeneration  of polarized Calabi-Yau manifolds, denoted by   $(\pi: \mathcal{X}\rightarrow \Delta, \mathcal{L})$. A Calabi-Yau variety $X_0$ is a normal projective Gorenstein variety with trivial canonical    sheaf $\mathcal{K}_{X_0}\cong \mathcal{O}_{X_0}$, and having at worst   canonical singularities, i.e.  for any resolution $\bar{\pi}: \bar{X}_0\rightarrow X_0$,   $$\mathcal{K}_{\bar{X}_0} \cong_{\mathbb{Q}}  \bar{\pi}^{*}\mathcal{K}_{X_0}+\sum\limits_{E} a_{E}E, \ \ {\rm and} \  \  a_{E} \geq 0, $$ where $E$ are effective  exceptional prime divisors, and $\cong_{\mathbb{Q}} $ stands for the $\mathbb{Q}$-linear equivalence.

  There are several recent works, due to C.-L. Wang, V. Tosatti, and S. Takayama etc. (\cite{Wang1,Wang2,To,Ta}),   establishing  the equivalencies of various properties along  degenerations of Calabi-Yau manifolds. It begins as  Candelas,  Green and H\"{u}bsch discovered that  some nodal  degenerations of Calabi-Yau 3-folds   have
 finite Weil-Petersson distances (cf.  \cite{CGH}).
In general,   \cite{Wang1} proves  that for  a degeneration of Calabi-Yau $n$-manifolds,
  the Weil-Petersson distance between general fibers and the central fiber is finite, if
 the central fiber is  a  Calabi-Yau variety. Wang conjectured     in \cite{Wang2} that the converse is also true, and proposed  to attack the conjecture by using the minimal model program. Eventually, 
   \cite{To,Ta} prove Wang's conjecture, and  obtain  the equivalency between the finiteness of Weil-Petersson metric and the central filling-in with Calabi-Yau varieties under various conditions.  In \cite{Ta},   the further equivalence to the properties of  Ricci-flat K\"{a}hler-Einstein  metrics  is also  established. The equivalencies are used in \cite{zha1} to construct completions for moduli spaces for  polarized  Calabi-Yau manifolds.
   The goal of this note is to  add two more perspectives  to  this picture.

   If $(\pi: \mathcal{X}\rightarrow \Delta, \mathcal{L})$ is  a  degeneration of polarized Calabi-Yau manifolds, then  all fibers $X_{t}$, $t\in \Delta^*$, are diffeomorphic to each other, and we denote
  $X$  the underlying differential manifold.
    Let $\mathbb{H}\rightarrow \Delta^*$ be  the universal covering given by $w\mapsto t=\exp 2\pi\sqrt{-1} w$, where $\mathbb{H}=\{w\in \mathbb{C}| {\rm Im} (w)>0\} $,  and $\tilde{\pi}: \tilde{\mathcal{X}}\rightarrow \mathbb{H}$ be the pull-back family of $\mathcal{X}_{\Delta^*}=\pi^{-1}(\Delta^*) \rightarrow \Delta^*$. The total space  $\tilde{\mathcal{X}}$ is diffeomorphic to $X\times \mathbb{H}$, and we identify $H^{n}(X_{t}, \mathbb{C})$, $t\neq 0$, canonically with $H^{n}(X, \mathbb{C})$. Under  this setup,  we have the first  theorem of this paper.

    \begin{theorem}\label{thm1}  Let $(\pi: \mathcal{X}\rightarrow \Delta, \mathcal{L})$ be  a  degeneration  of polarized Calabi-Yau manifolds.
  Let $\Omega_t$ be  the holomorphic volume form on $X_t$, $t\in\Delta^{*}$,  i.e. a nowhere vanishing section of the canonical bundle $\mathcal{K}_{X_t}$,   such that \begin{equation}\label{equ0.1} (-1)^{\frac{n^2}{2}}\int_{X_t}\Omega_t\wedge \overline{\Omega}_t\equiv 1,\end{equation} and $\omega_{t}$ be  the unique Ricci-flat  K\"{a}hler-Einstein   metric
in  $
c_{1}(\mathcal{L})|_{X_{t}}\in H^{1,1}(X_{t}, \mathbb{R})$.
  Then the  following statements are equivalent.
 \begin{itemize}
  \item[i)] When $t\rightarrow 0$,  the cohomology classes    $$ [ \Omega_t] \rightarrow \beta   \  \  \  \ {\it  in } \  \  \ H^{n}(X, \mathbb{C}).$$
   \item[ii)]   When $t\rightarrow 0$,     $$(X_{t}, \omega_{t})
\stackrel{d_{GH}}\longrightarrow (Y, d_{Y}),  $$ in the Gromov-Hausdorff sense, where    $(Y, d_{Y}) $ is a compact metric space.
\item[iii)]  The origin $0\in \Delta$ is at finite  Weil-Petersson distance from $\Delta^{*}$. \end{itemize}
 \end{theorem}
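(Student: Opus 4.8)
The plan is to take statement iii), the finiteness of the Weil--Petersson distance, as the hub: I will prove i)~$\Leftrightarrow$~iii) by a Hodge-theoretic asymptotic analysis, and deduce ii)~$\Leftrightarrow$~iii) from the theorems of Tosatti and Takayama \cite{To,Ta} together with the known convergence theory for Ricci-flat K\"ahler--Einstein metrics. For the setup, pass to the universal cover $\tilde\pi\colon\tilde{\mathcal X}\to\mathbb H$ and fix a nowhere-vanishing holomorphic section $\Xi_w$ of the relative canonical bundle of $\tilde{\mathcal X}\to\mathbb H$; then $\Omega_w=g(w)\,\Xi_w$ with $|g(w)|^2=\big((-1)^{n^2/2}\int_{X_w}\Xi_w\wedge\overline{\Xi}_w\big)^{-1}$, and the Weil--Petersson form on $\Delta^*$ is $\omega_{WP}=-\ddbar\log\big((-1)^{n^2/2}\int_{X_t}\Xi_t\wedge\overline{\Xi}_t\big)$, independent of the choice of $\Xi$. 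After a finite base change $t=\tau^k$, which affects none of i)--iii) (it replaces the family by a reparametrization, leaves the Gromov--Hausdorff limit and the Weil--Petersson distance to the origin unchanged, and does not affect the convergence of $[\Omega_t]$ read on the universal cover), I may assume the monodromy $T$ on $H^n(X,\mathbb C)$ is unipotent, $T=e^N$ with $N$ nilpotent. By Schmid's nilpotent orbit theorem I may then normalize $\Xi$ so that $[\Xi_w]=e^{wN}a(t)$ with $a(t)=a_0+a_1t+\cdots$ holomorphic across $t=0$ and $a_0\ne 0$; let $m$ be the largest integer with $N^m a_0\ne 0$. The one-variable $SL_2$-orbit theorem applied to the polarized limiting mixed Hodge structure on $H^n(X)$ gives $(-1)^{n^2/2}\int_{X_t}\Xi_t\wedge\overline{\Xi}_t=c\,(-\log|t|)^m\,(1+o(1))$ with $c>0$.

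\emph{Proof that i)~$\Leftrightarrow$~iii).} Write $[\Omega_w]=g(w)\,e^{wN}a(t)$, where $|g(w)|\asymp(\operatorname{Im}w)^{-m/2}$ while $e^{wN}a(t)\to a_0$ if $m=0$ and $\|e^{wN}a(t)\|\asymp|w|^m\gtrsim(\operatorname{Im}w)^m$ if $m\ge 1$. If $m=0$ then $e^{wN}a_0=a_0$, hence $[\Omega_w]\to\beta:=(\lim g)\,a_0\ne 0$ and i) holds; if $m\ge 1$ then $\|[\Omega_w]\|\gtrsim(\operatorname{Im}w)^{m/2}\to\infty$, so i) fails. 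On the other hand, from $\omega_{WP}=-\ddbar\log\big(c(-\log|t|)^m(1+o(1))\big)\asymp m\,|t|^{-2}(\log|t|)^{-2}\,\tfrac{\sqrt{-1}}{2}dt\wedge d\bar t$ plus an integrable remainder, the length of a radial path to the origin is $\asymp\sqrt m\int_0^\varepsilon\frac{dr}{r|\log r|}$, which diverges for $m\ge 1$ and converges for $m=0$. Thus both i) and iii) are equivalent to $m=0$, i.e. to $Na_0=0$.

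\emph{Proof that ii)~$\Leftrightarrow$~iii).} By Wang's conjecture, established in \cite{To,Ta}, iii) holds if and only if, after a finite base change and a birational modification, $\mathcal X$ admits a filling by a Calabi--Yau variety $X_0$. In that case the results on convergence of Ricci-flat K\"ahler--Einstein metrics in a fixed cohomology class (Eyssidieux--Guedj--Zeriahi, Rong--Zhang, Tosatti, Donaldson--Sun, and \cite{Ta}) yield $(X_t,\omega_t)\stackrel{d_{GH}}{\longrightarrow}(Y,d_Y)$, where $(Y,d_Y)$ is the metric completion of the regular locus of $X_0$ equipped with its singular Ricci-flat K\"ahler--Einstein metric; this $Y$ is compact because $X_0$ is a compact (projective) variety. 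Conversely, if $(X_t,\omega_t)$ Gromov--Hausdorff converges to a compact metric space, the non-collapsing structure theory identifies the limit with a normal Calabi--Yau variety providing such a filling, so iii) follows; this is again contained in \cite{Ta}. (One can also check i)~$\Leftrightarrow$~ii) directly: given a Calabi--Yau filling with relative holomorphic volume form $\Xi_0$, the finiteness of $\int_{X_0}\Xi_0\wedge\overline{\Xi}_0$, forced by the canonical singularities, together with continuity of periods identifies $\beta$ with a normalization of $[\Xi_0]$.)

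\emph{Main obstacle.} The technical heart is the asymptotic identity $(-1)^{n^2/2}\int_{X_t}\Xi_t\wedge\overline{\Xi}_t\sim c\,(-\log|t|)^m$ with $c>0$ and the control of the period $[\Xi_w]=e^{wN}a(t)$: these rest on Schmid's nilpotent- and $SL_2$-orbit theorems and on the positivity in the polarized limiting mixed Hodge structure, which pins down the leading coefficient and forces the subleading terms to be genuinely of lower order. One must also keep track of the finite base change and of the $U(1)$-ambiguity of the normalized $\Omega_t$, so that statement i) is interpreted on the universal cover (equivalently, as the existence of a continuous choice of phases making $[\Omega_t]$ converge); with $Na_0=0$ this ambiguity reduces to a single global phase, since $e^{wN}$ then fixes $a_0$. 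The step ii)~$\Leftrightarrow$~iii) is deep but is quoted from \cite{To,Ta}.
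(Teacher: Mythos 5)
Your i)~$\Leftrightarrow$~iii) argument is essentially the paper's Proposition~\ref{prop1} (which is Wang's \cite{Wang1} argument): pass to the universal cover, make the monodromy unipotent by a base change, use Schmid's nilpotent orbit theorem to write $[\Xi_w]=e^{wN}A(t)$ with $A$ holomorphic across $0$, derive $\int_{X_t}(-1)^{n^2/2}\Xi_t\wedge\overline{\Xi}_t\sim c(-\log|t|)^{d}$ from the polarized limiting mixed Hodge structure, and observe that both the convergence of the normalized classes and the finiteness of the Weil--Petersson distance are equivalent to $d=0$, i.e.\ $Na_0=0$. That part is fine and matches the paper (the paper quotes Wang for the positivity $Q(N^d a_0,\bar a_0)\neq 0$ and for the distance computation, which you sketch directly).

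The gap is in your iii)~$\Rightarrow$~ii). First, the MMP resolution of Wang's conjecture in \cite{To,Ta} is invoked in this paper (Theorem~\ref{thm3}~iii) and the remark after Theorem~\ref{prop2}) only under the \emph{additional} hypothesis that the degeneration comes from a quasi-projective family, an assumption absent from Theorem~\ref{thm1}; the paper deliberately avoids the MMP for exactly this reason. Second, and more seriously, even granting a Calabi--Yau filling $\mathcal{X}'$ produced by the MMP, the polarization need not extend: minimal models are unique only up to flops, and at most one of them admits a relatively ample extension of (a power of) $\mathcal{L}$ across the new central fiber. The convergence results you quote (\cite{EGZ,RoZ1,RoZ2}) concern Ricci-flat metrics in the classes $c_1(\mathcal{L}')|_{X_t}$ of a line bundle defined on the \emph{whole} filled-in family; they do not apply to your metrics $\omega_t\in c_1(\mathcal{L})|_{X_t}$ unless the filling is compatibly polarized, so the identification of the Gromov--Hausdorff limit with the metric completion of the singular K\"ahler--Einstein metric on $X_0'$ does not follow as stated. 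The paper closes precisely this gap in Theorem~\ref{prop2}: finite Weil--Petersson distance gives a uniform diameter bound (\cite{Ta}, Corollary~1.7), Gromov precompactness and Donaldson--Sun theory \cite{DS} produce a subsequential limit $X_\infty$ embedded by $L_t^m$ via orthonormal bases with respect to the K\"ahler--Einstein metrics, a Hilbert-scheme orbit-closure argument (using local Torelli) realizes $X_\infty$ as the central fiber of a new \emph{polarized} family $(\mathcal{X}',\mathcal{L}')$ with $\mathcal{L}'|_{X_s'}\cong L_t^m$, and only then do \cite{RoZ1,RoZ2} yield convergence of the full family as $t\to 0$, with no passage to subsequences. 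You should replace the MMP step by such an argument (or by Lemma~6.9 of \cite{LWX}). Your ii)~$\Rightarrow$~iii) via the diameter bound and Theorem~1.4 of \cite{Ta} is fine and is what the paper does.
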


  The purpose of this theorem  is to present the equivalences obtained in \cite{Wang1,Wang2,To,Ta} from a more Riemannian geometric point of view.  We regards $X_t$, $t\neq 0$,  as $X$ equipped with a complex structure $J_t$, and the metric $ \omega_{t}$ as a Riemannian metric  with  holonomy group   $SU(n)$.  The holomorphic volume form
      $\Omega_t$  is parallel with respect to any Ricci-flat K\"{a}hler-Einstein metric on $X_t$, and after a certain normalization, $ \Omega_{t}$  gives two  calibration  $n$-forms  ${\rm Re}(\Omega_t)$ and ${\rm Im}(\Omega_t)$ in the sense of \cite{HL}.
      Therefore, Theorem \ref{thm1} gives a criterion of Gromov-Hausdorff convergence of Ricci-flat K\"{a}hler-Einstein metrics via the cohomology  classes of calibration  forms in the context of special holonomy  Riemannian geometry. It is desirable to remove the algebro-geometric conditions, for example $X_t$ fitting into an algebraic family with respect to the parameter $t$, and to prove it directly  without using the sophisticated algebraic geometry and PDE.

   Now we switch to a more analytic point of view. We recall that a degeneration of polarized Calabi-Yau manifolds  $(\pi: \mathcal{X}\rightarrow \Delta, \mathcal{L})$ is called having only  log-canonical singularities, if $\mathcal{X}$ is normal, $X_0$ is reduced,  $\mathcal{K}_{\mathcal{X}}+X_{0}$ is $\mathbb{Q}$-Cartier, and for any log-resolution $\bar{\pi}: \bar{\mathcal{X}}\rightarrow \mathcal{X}$ of singularities,
   $$\mathcal{K}_{\bar{\mathcal{X}}}+ X_{0}' \cong_{\mathbb{Q}}  \bar{\pi}^{*}(\mathcal{K}_{\mathcal{X}}+X_{0})+\sum\limits_{E} a_{E}E, \ \ {\rm and} \  \  a_{E} \geq 0, $$ where $E$ are effective  exceptional prime divisors, and  $X_{0}'$ is the strict transform of $X_{0}$ (cf. \cite{KM}).

 Let $(\pi: \mathcal{X}\rightarrow \Delta, \mathcal{L})$ be a  degeneration of polarized Calabi-Yau $n$-manifolds.  There is an integer  $m\geq 1$ such that $\mathcal{L}^{m}$ is relative  very ample, which induces a relative  embedding $\Phi: \mathcal{X} \hookrightarrow \mathbb{CP}^{N}\times\Delta$ such that $\mathcal{L}^{m}\cong \Phi^{*} \mathcal{O}_{\mathbb{CP}^{N}}(1)$, and the restriction $\Phi_{t}=\Phi|_{X_{t}}$ embeds $X_{t}$ into $\mathbb{CP}^{N}$ for any $t\in\Delta$. Note that the choice of  $\Phi$ is not unique.
  If  $\omega_{t}$ is   the unique Ricci-flat  K\"{a}hler-Einstein   metric
in  $
c_{1}(\mathcal{L}^{m})|_{X_{t}}\in H^{1,1}(X_{t}, \mathbb{R})$, and $\omega_{FS}$ denotes the Fubini-Study metric on $\mathbb{CP}^{N}$, then for any $t\in \Delta^{*}$,  there is a unique function
 $\varphi_{t}$, called  the  potential function,  on $X_t$  such that $$\omega_{t}=\Phi_{t}^{*}\omega_{FS}+\sqrt{-1}\partial\overline{\partial}\varphi_{t}, \  \  \  \  \  \sup_{X_{t}} \varphi_{t}=0.$$  The second  theorem is the following.

  \begin{theorem}\label{thm2} Let $(\pi: \mathcal{X}\rightarrow \Delta, \mathcal{L})$ be a  degeneration of polarized Calabi-Yau $n$-manifolds such that $\mathcal{X}$ is normal, the relative canonical bundle $\mathcal{K}_{\mathcal{X}/\Delta}$ is trivial, i.e. $\mathcal{K}_{\mathcal{X}/\Delta}\cong \mathcal{O}_{\mathcal{X}}$, and $\mathcal{X}\rightarrow \Delta$ has at worst log-canonical singularities.
     Let  $\omega_{t}$ be  the unique Ricci-flat  K\"{a}hler-Einstein   metric
presenting  $
c_{1}(\mathcal{L}^{m})|_{X_{t}}\in H^{1,1}(X_{t}, \mathbb{R})$, $t\in\Delta^{*}$, where $m\geq  1$ such that $ \mathcal{L}^{m}$ is relative very ample.
 Then the  following statements are equivalent.
 \begin{itemize}
  \item[i)] There is a relative embedding $\Phi: \mathcal{X} \hookrightarrow \mathbb{CP}^{N}\times\Delta$ induced by $ \mathcal{L}^{m}$ such that the potential functions  $\varphi_{t}$ determined    by $\Phi$ and  $\omega_{t}$ satisfy
      $$ \inf_{X_{t}} \varphi_{t} \geq -C,$$ for a
   constant $C>0$ independent of $t$.
   \item[ii)] The central fiber  $X_0$ is a Calabi-Yau variety.
    \item[iii)]  When $t\rightarrow 0$,    $$(X_{t}, \omega_{t})
\stackrel{d_{GH}}\longrightarrow (Y, d_{Y}),  $$ in the Gromov-Hausdorff sense, where    $(Y, d_{Y}) $ is a compact metric space.  \end{itemize}
 \end{theorem}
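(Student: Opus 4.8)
The plan is to prove i)$\,\Leftrightarrow\,$ii) directly and to read off ii)$\,\Leftrightarrow\,$iii) from \cite{Wang1,To,Ta} together with Theorem \ref{thm1}. First I would fix a nowhere vanishing relative holomorphic $n$-form $\Omega$ on $\mathcal{X}$ (available since $\mathcal{K}_{\mathcal{X}/\Delta}\cong\mathcal{O}_{\mathcal{X}}$), put $\Omega_{t}=\Omega|_{X_{t}}$, and record that by Yau's theorem $\omega_{t}$ solves $\omega_{t}^{n}=\frac{V}{f(t)}(-1)^{n^{2}/2}\Omega_{t}\wedge\overline{\Omega}_{t}$, equivalently $(\omega_{FS,t}+\sqrt{-1}\partial\overline{\partial}\varphi_{t})^{n}=\frac{V}{f(t)}(-1)^{n^{2}/2}\Omega_{t}\wedge\overline{\Omega}_{t}$ with $\sup_{X_{t}}\varphi_{t}=0$, where $\omega_{FS,t}=\Phi_{t}^{*}\omega_{FS}$, $V=(c_{1}(\mathcal{L}^{m})|_{X_{t}})^{n}$ is a topological constant and $f(t)=(-1)^{n^{2}/2}\int_{X_{t}}\Omega_{t}\wedge\overline{\Omega}_{t}$. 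The key input, essentially \cite{Wang1} together with the definition of canonical singularities (using that $X_{0}$ Gorenstein makes Kawamata log terminal $=$ canonical for $X_{0}$), is that ii) holds if and only if $f(0):=(-1)^{n^{2}/2}\int_{X_{0}}\Omega_{0}\wedge\overline{\Omega}_{0}<+\infty$ where $\Omega_{0}=\Omega|_{X_{0}}$ generates $\mathcal{K}_{X_{0}}$; moreover $f(t)\to f(0)$ as $t\to0$, so $V/f(t)$ stays bounded away from $0$ and $+\infty$ when ii) holds and $V/f(t)\to0$ when ii) fails.

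For ii)$\,\Rightarrow\,$i) I would choose the relative embedding $\Phi$ so that $\Phi_{0}$ is the projective embedding of the Calabi-Yau variety $X_{0}$ by a basis of $H^{0}(X_{0},\mathcal{L}^{m}|_{X_{0}})$ --- legitimate after shrinking $\Delta$, since $\pi_{*}\mathcal{L}^{m}$ is locally free, $H^{0}(\mathcal{X},\mathcal{L}^{m})$ surjects onto $H^{0}(X_{0},\mathcal{L}^{m}|_{X_{0}})$, and $\mathcal{L}^{m}|_{X_{0}}$ is very ample. With this choice $\omega_{FS,t}$ converges smoothly over $X_{0}^{\mathrm{reg}}$ to the K\"{a}hler metric $\omega_{FS,0}=\Phi_{0}^{*}\omega_{FS}$ while remaining in the fixed class. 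Since $X_{0}$, and hence by inversion of adjunction the total space $\mathcal{X}$ near $X_{0}$, has Kawamata log terminal singularities, the densities $\frac{V}{f(t)}(-1)^{n^{2}/2}\Omega_{t}\wedge\overline{\Omega}_{t}/\omega_{FS,t}^{n}$ lie in $L^{p}(\omega_{FS,t}^{n})$ for some $p>1$ with a $t$-independent bound; this uniform integrability rests on the openness of the klt condition and is the technical core of \cite{To,Ta}. Combined with the version of Ko\l{}odziej's $L^{\infty}$ estimate that is uniform over K\"{a}hler submanifolds of $(\mathbb{CP}^{N},\omega_{FS})$ of fixed dimension and degree, it gives $\mathrm{osc}_{X_{t}}\varphi_{t}\leq C$; since $\sup_{X_{t}}\varphi_{t}=0$ this is exactly i).

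For i)$\,\Rightarrow\,$ii) I would argue by contraposition. Assume $X_{0}$ is not a Calabi-Yau variety, so $V/f(t)\to0$, and suppose some relative embedding $\Phi$ gives $\inf_{X_{t_{j}}}\varphi_{t_{j}}\geq -C$ along a sequence $t_{j}\to0$. Because $X_{0}$ is a Cartier divisor in $\mathcal{X}$, the total space is smooth and $\pi$ is submersive over $X_{0}^{\mathrm{reg}}$, so there $J_{t_{j}}\to J_{0}$, $\omega_{FS,t_{j}}\to\omega_{FS,0}$ and $\Omega_{t_{j}}\to\Omega_{0}$ smoothly. The $\varphi_{t_{j}}$ are $\omega_{FS,t_{j}}$-plurisubharmonic with $-C\leq\varphi_{t_{j}}\leq0$, so a subsequence converges in $L^{1}_{\mathrm{loc}}$ over $X_{0}^{\mathrm{reg}}$ to a bounded $\omega_{FS,0}$-psh function $\varphi_{0}$, and by Bedford--Taylor continuity $\omega_{t_{j}}^{n}=(\omega_{FS,t_{j}}+\sqrt{-1}\partial\overline{\partial}\varphi_{t_{j}})^{n}\to\nu:=(\omega_{FS,0}+\sqrt{-1}\partial\overline{\partial}\varphi_{0})^{n}$ weakly on $X_{0}^{\mathrm{reg}}$. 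On one hand, $\varphi_{0}$ being bounded, $\nu$ puts no mass on the pluripolar set $\mathrm{Sing}(X_{0})$ and has total mass $\int_{X_{0}}\omega_{FS,0}^{n}=V>0$, so $\nu$ is nonzero on $X_{0}^{\mathrm{reg}}$. On the other hand, for any open $U\Subset X_{0}^{\mathrm{reg}}$ one has $\nu(U)\leq\liminf_{j}\omega_{t_{j}}^{n}(U)=\liminf_{j}\frac{V}{f(t_{j})}(-1)^{n^{2}/2}\int_{U}\Omega_{t_{j}}\wedge\overline{\Omega}_{t_{j}}=0$, since $\int_{U}(-1)^{n^{2}/2}\Omega_{t_{j}}\wedge\overline{\Omega}_{t_{j}}\to\int_{U}(-1)^{n^{2}/2}\Omega_{0}\wedge\overline{\Omega}_{0}<\infty$ while $V/f(t_{j})\to0$. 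Exhausting $X_{0}^{\mathrm{reg}}$ by such $U$ gives $\nu\equiv0$ there, a contradiction; hence i) fails.

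Finally, for ii)$\,\Leftrightarrow\,$iii) I would observe that Theorem \ref{thm1} identifies statement iii) with the finiteness of the Weil--Petersson distance of $0$ from $\Delta^{*}$, and that under the present hypotheses \cite{Wang1,To,Ta} identify that finiteness with $X_{0}$ being a Calabi--Yau variety. The hard part will be the uniformity in $t$ needed for ii)$\,\Rightarrow\,$i): both the $L^{p}$ bound on the Monge-Amp\`ere densities and the $t$-independence of Ko\l{}odziej's constant along the degenerating family $\omega_{FS,t}$. I expect to quote these from \cite{To,Ta} rather than reprove them; granting them, the remaining pluripotential-theoretic compactness used for i)$\,\Rightarrow\,$ii) is routine apart from some care near $\mathrm{Sing}(X_{0})$.
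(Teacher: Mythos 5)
Your treatment of ii)$\Rightarrow$i) and of ii)$\Leftrightarrow$iii) is essentially sound and runs parallel to the paper: the paper also quotes the uniform $C^{0}$-estimate (it cites Section 3 of \cite{RoZ1} rather than re-deriving the uniform $L^{p}$ density bound plus the family version of Ko\l odziej's estimate), and it also reduces ii)$\Leftrightarrow$iii) to \cite{RoZ2}, Theorem \ref{thm1} and Theorem 1.3 of \cite{Ta}. The genuinely different part of your plan is i)$\Rightarrow$ii), and that is where there is a real gap. Your contrapositive argument hinges on the assertion that, since the potentials $\varphi_{t_{j}}$ are uniformly bounded quasi-psh functions converging in $L^{1}_{\mathrm{loc}}$ over $X_{0}^{\mathrm{reg}}$ to a bounded limit $\varphi_{0}$, ``Bedford--Taylor continuity'' gives $(\omega_{FS,t_{j}}+\sqrt{-1}\partial\overline{\partial}\varphi_{t_{j}})^{n}\to(\omega_{FS,0}+\sqrt{-1}\partial\overline{\partial}\varphi_{0})^{n}$ weakly. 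The complex Monge--Amp\`ere operator is \emph{not} continuous along $L^{1}_{\mathrm{loc}}$ (or a.e.) convergence of locally uniformly bounded psh functions; continuity holds only under monotone convergence, uniform convergence, or convergence in capacity, none of which you have established here (and the situation is worse than the classical one, since the complex structures and reference forms also vary with $t_{j}$, so even stating the convergence needs the smooth trivialization near $X_{0}^{\mathrm{reg}}$). Without this step you cannot pass from ``local masses of $\omega_{t_{j}}^{n}$ on $U\Subset X_{0}^{\mathrm{reg}}$ tend to $0$'' to a contradiction, because a priori all of the mass $V$ could concentrate near $\mathrm{Sing}(X_{0})$ (or on vanishing cycles) in the limit --- ruling this out is exactly the hard point. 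There are also secondary issues in the same step: under the log-canonical hypothesis $X_{0}$ need not be normal, so the extension of $\varphi_{0}$ across $\mathrm{Sing}(X_{0})$ and the ``full mass $V$ for bounded potentials'' claim require extra care (normalization, several components), and these are nontrivial statements in their own right.

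For comparison, the paper avoids pluripotential compactness altogether in this direction: assuming the uniform lower bound on $\varphi_{t}$, it applies the Chern--Lu inequality and the maximum principle to $\log\mathrm{tr}_{\omega_{t}}\omega_{t}^{o}-2\overline{R}\varphi_{t}$ to get the uniform Schwarz-type estimate $\omega_{t}^{o}\leq C\omega_{t}$; evaluating the Monge--Amp\`ere equation $\omega_{t}^{n}=e^{\rho_{t}}(-1)^{n^{2}/2}\Psi_{t}\wedge\overline{\Psi}_{t}$ on a fixed coordinate neighborhood of a point of $\mathcal{X}_{\mathrm{reg}}\cap X_{0,\mathrm{reg}}$ then bounds $e^{\rho_{t}}$ from below, hence $\int_{X_{t}}(-1)^{n^{2}/2}\Psi_{t}\wedge\overline{\Psi}_{t}$ from above uniformly, and Corollary 1.5 of \cite{Ta} yields ii). If you want to keep your contrapositive formulation, you would need either this kind of pointwise estimate or a convergence-in-capacity statement for the family $\varphi_{t}$; as written, the $L^{1}_{\mathrm{loc}}$-compactness argument does not close.
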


 Note that in the condition i) of Theorem \ref{thm2}, if we choose a different  embedding $\Phi'$, then the new potential function $\varphi_{t}'=\varphi_{t}+\xi $, where $\xi$ is a continuous  function on $\mathcal{X}$ such that $\Phi'^{*}\omega_{FS}=\Phi^{*}\omega_{FS}-\sqrt{-1}\partial\overline{\partial}\xi$ on any $X_t$.  Hence the uniformly boundedness   of $\varphi_{t}$ is  equivalent to the boundedness of $\varphi_{t}'$, and the precise bound is not essential.   We can replace  i) by saying that for any embedding induced by $ \mathcal{L}^{m}$, the boundedness of potential functions holds.
  This theorem shows the equivalency between such  boundedness condition   and many other equivalent properties studied in \cite{Wang1,To,Ta}.

  Section 2 gives an exposition of a result due to C.-L. Wang, which shows the equivalency between the convergence of cohomology classes and the finiteness of Weil-Petersson distances.
 We prove Theorem \ref{thm1} in Section 3 and Theorem \ref{thm2} in Section 4.

\noindent {\bf Acknowledgements:}
 The author would  like to thank Valentino Tosatti for inviting him  to write Theorem \ref{thm2} and many comments, and thank   Xiaowei Wang for helpful discussions. This paper was written when the author worked at the Imperial College London. He thanks the mathematics department for the nice working environment.

\section{ Hodge theoretic criterion  }
 Let $(\pi: \mathcal{X}\rightarrow \Delta, \mathcal{L})$ be a  degeneration of polarized Calabi-Yau $n$-manifolds, and
$(\pi: \mathcal{X}^*=\pi^{-1}(\Delta^*) \rightarrow \Delta^*, \mathcal{L})$ be the base change by the inclusion   $\Delta^*\hookrightarrow \Delta$.
A natural  K\"{a}hler metric, possibly  degenerated, is defined on $\Delta^*$, called the  Weil-Petersson metric, which measures the deformation of complex structures of fibers $X_{t}$.  If $\Theta_t$ is a relative holomorphic volume form, i.e. a  no-where vanishing  section of the relative canonical bundle $ \mathcal{K}_{\mathcal{X}^*/\Delta^*}$, then
 the Weil-Petersson metric is $$ \omega_{WP}=-\frac{\sqrt{-1}}{2\pi}\partial\overline{\partial}\log \int_{X_{t}}(-1)^{\frac{n^{2}}{2}} \Theta_{t} \wedge \overline{\Theta}_{t}\geq0,$$   (cf. \cite{Ti,Tod}).    The Weil-Petersson metric $ \omega_{WP}$  is the curvature of the first Hodge bundle  $\pi_{*}\mathcal{K}_{\mathcal{X}^*/\Delta^*}$  with a natural  Hermitian metric.

We rephrase   Corollary  1.2 in \cite{Wang1} as the following.

  \begin{proposition}\label{prop1} Let $X$ be the underlying differential manifold of general  fibers $X_{t}$, $t\in \Delta^*$, and let    $\Omega_t$ be  the holomorphic volume form on $X_t$, $t\in\Delta^{*}$,  such that $$(-1)^{\frac{n^2}{2}}\int_{X_t}\Omega_t\wedge \overline{\Omega}_t\equiv 1.$$
 Then the  following statements are equivalent.
 \begin{itemize}
  \item[i)] When $t\rightarrow 0$,     $$ [ \Omega_t] \rightarrow \beta   \  \  \  \ {\it  in } \  \  \ H^{n}(X, \mathbb{C}).$$
    \item[ii)]  The origin $0\in \Delta$ is at finite  Weil-Petersson distance from $\Delta^{*}$.   \end{itemize}
 \end{proposition}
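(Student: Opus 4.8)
The plan is to reduce Proposition~\ref{prop1} to a careful analysis of the period map $w \mapsto [\Omega_{t(w)}]$ on the universal cover $\mathbb{H} \to \Delta^*$, combined with the nilpotent orbit theorem of Schmid. First, fix a relative holomorphic volume form $\Theta_t$ on $\mathcal{X}^*$; possibly after shrinking $\Delta$ and multiplying by a holomorphic function, we may take $\Theta_t$ to extend as a rational section over the total space. The normalized form is $\Omega_t = e^{-f(t)}\Theta_t$ where $e^{-2\Re f(t)} = \left((-1)^{n^2/2}\int_{X_t}\Theta_t\wedge\overline{\Theta}_t\right)^{-1}$, so that the Weil--Petersson potential is exactly $2\Re f(t)$ up to an additive constant, i.e. $\omega_{WP} = \frac{\sqrt{-1}}{2\pi}\partial\overline{\partial}\log\|\Theta_t\|^2$ where $\|\Theta_t\|^2 = (-1)^{n^2/2}\int \Theta_t\wedge\overline{\Theta}_t$. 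The key quantitative link is the Hodge--Riemann bilinear relation: for the primitive class $[\Omega_t]$ we have $\|[\Omega_t]\|^2_{H^n} \sim (-1)^{n^2/2}\int_{X_t}\Omega_t\wedge\overline{\Omega}_t = 1$ in a suitable norm, but the individual integral period coordinates of $[\Theta_t]$ in a fixed topological basis of $H^n(X,\mathbb{C})$ grow at controlled rates governed by the monodromy.

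Next I would invoke the $SL_2$-orbit / nilpotent orbit structure. Let $T = e^N$ be the unipotent part of the monodromy (after a finite base change we may assume the monodromy is unipotent), with $N$ the logarithm of monodromy, and write the period in the form $[\Theta_{t(w)}] = e^{wN}\big(a_0 + a_1 e^{2\pi\sqrt{-1} w} + \cdots\big)$ where $a_0,a_1,\dots \in H^n(X,\mathbb{C})$ and $a_0 \neq 0$ is the leading term. Then $\|\Theta_t\|^2$ behaves asymptotically like a polynomial in $\log|t|^{-1}$ — more precisely, $\|\Theta_t\|^2 \sim c\,(-\log|t|)^{d}$ as $t \to 0$, where $d = \max\{k : N^k a_0 \neq 0\}$ is the index of nilpotency realized on the leading period vector (this is Schmid's norm estimate). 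Consequently, up to bounded error, $2\Re f(t) = \log\|\Theta_t\|^2 \sim \log\big((-\log|t|)^d\big) + O(1)$, so $\Re f(t) \sim \frac{d}{2}\log(-\log|t|)$. The Weil--Petersson length of a radial path $t \in (0,\varepsilon]$ is then $\int_0^\varepsilon |\partial_t f|\,|dt|$, and a direct computation shows this is finite precisely when the growth of $\Re f$ is no worse than logarithmic-in-$\log$, which always holds, \emph{but} the subtlety is that $0$ is at finite WP distance iff $\omega_{WP}$ extends with at most a logarithmic pole — equivalently iff $f$ is bounded, equivalently iff $d = 0$, i.e. $N a_0 = 0$. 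Meanwhile, $[\Omega_t] = e^{-f(t)}[\Theta_{t(w)}]$ converges in $H^n(X,\mathbb{C})$ iff $e^{-f(t)}e^{wN}a_0$ converges (the higher-order terms $e^{2\pi\sqrt{-1}w}a_1$ etc. decay), and since $e^{wN}a_0 = a_0 + wN a_0 + \cdots$, this converges iff $N a_0 = 0$; the normalization $\|[\Omega_t]\| = 1$ prevents the limit from being zero. Thus both i) and ii) are equivalent to the single algebraic condition $N a_0 = 0$, which is the crux of the argument.

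The main obstacle — and the step requiring the most care — is making the asymptotic equivalence ``finite WP distance $\iff d=0$'' rigorous and identifying it with the convergence of $[\Omega_t]$, because a priori $0$ could be approached along non-radial paths and the one-dimensional base makes this tractable but one must still rule out that the logarithmic growth $(-\log|t|)^{d/2}$ with $d \geq 1$ could yield finite length. The resolution is the standard computation: with $s = -\log|t| \to \infty$, $\Re f \sim \frac{d}{2}\log s$, $|df| \sim \frac{d}{2s}|d(\log|t|)| = \frac{d}{2s}\cdot\frac{|dt|}{|t|}$, and $\int_0 \frac{1}{|t|(-\log|t|)}|dt| = \int^\infty \frac{ds}{s} = \infty$, so \emph{any} $d \geq 1$ gives infinite WP distance, while $d = 0$ makes $f$ bounded and $\omega_{WP}$ extends smoothly, giving finite distance. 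I would also need to check the converse direction carefully — that finite WP distance forces $d=0$ rather than merely being consistent with it — which follows from the lower bound half of Schmid's norm estimate, that $\|\Theta_t\|^2 \geq c(-\log|t|)^d$ for $d$ the true nilpotency index on $a_0$. Assembling these, i) $\iff$ $Na_0 = 0$ $\iff$ ii) completes the proof; the content is entirely in the asymptotic Hodge theory of Schmid's orbit theorems applied to the one-parameter degeneration.
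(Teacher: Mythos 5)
Your outline follows essentially the same route as the paper (which is a recollection of Wang's argument): pass to the universal cover, reduce to unipotent monodromy, expand the period as $[\Theta_t]=e^{wN}A(t)$ with $A(t)=a_0+O(t)$ via Schmid's nilpotent orbit theorem, establish $\|\Theta_t\|^2\sim c(-\log|t|)^{d}$ with $d=\max\{k:N^k a_0\neq0\}$, and observe that both i) and ii) reduce to the single condition $Na_0=0$; the divergence of $\sigma_t^{-1/2}w^{d}N^{d}a_0\sim(-\log|t|)^{d/2}$ when $d\geq1$ and the convergence $[\Omega_t]\to\sigma_0^{-1/2}a_0$ when $d=0$ are exactly the paper's computation. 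The two points where you deviate: (a) the paper obtains the crucial fact that the degree of the polynomial $Q(e^{2\sqrt{-1}yN}a_0,\overline{a_0})$ is \emph{exactly} $d$ (i.e. $Q(N^{d}a_0,\bar a_0)\neq0$) from Wang's mixed Hodge structure argument, while you appeal to the two-sided Schmid norm estimates — a legitimate alternative source for the same asymptotics; and (b) the paper simply cites Wang for the equivalence ``$d=0$ iff finite Weil--Petersson distance,'' whereas you sketch it directly. In that sketch there is a slip you should fix: the Weil--Petersson length element is not $|\partial_t f|\,|dt|$ for $f=\tfrac12\log\|\Theta_t\|^2$; the metric is the Levi form $-\tfrac{\sqrt{-1}}{2\pi}\partial\overline\partial\log\|\Theta_t\|^2$, whose coefficient for $\|\Theta_t\|^2\sim(-\log|t|)^{d}$ is $g_{t\bar t}\sim \tfrac{d}{4\pi}|t|^{-2}(-\log|t|)^{-2}$, so the length element is $\sim\sqrt{d}\,|dt|/(|t|(-\log|t|))$ — fortuitously of the same order as your heuristic, so the conclusion (infinite distance for every $d\geq1$, after noting the leading potential depends only on $|t|$ so the bound is uniform over non-radial paths) survives, but the derivation as written is not correct. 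Likewise ``$f$ bounded $\Rightarrow\omega_{WP}$ extends smoothly, hence finite distance'' needs the error term $b(t)=O(|t|(-\log|t|)^{n})$ and its derivatives, not just boundedness of the potential; this is precisely the part of Wang's computation the paper quotes rather than reproves.
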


 \begin{remark} Note that $\Omega_t$ does not vary holomorphically with respect to the variable $t$  due to the normalization condition, and it is only a smooth section of the relative canonical bundle.
 \end{remark}

 \begin{proof}
We recall  the argument in Section 1 of \cite{Wang1} with only
  a minor modification, which relies on the classical theory of Hodge degenerations (See \cite{Gr} and \cite{GHJ}  for the necessary  background knowledge  in the proof).  If $\mathbb{H}\rightarrow \Delta^*$ is the universal covering given by $w\mapsto t=\exp 2\pi\sqrt{-1} w$, and $\tilde{\pi}: \tilde{\mathcal{X}}\rightarrow \mathbb{H}$ is the base change  of $\mathcal{X}^* \rightarrow \Delta^*$, then $\tilde{\mathcal{X}}$ is diffeomorphic to $X\times \mathbb{H}$. We identify $H^{n}(X_{t}, \mathbb{C})$, $t\neq 0$, canonically with $H^{n}(X, \mathbb{C})$, and we always write $\tilde{\pi}^{-1}(w)$ as $X_t$ for $ t=\exp 2\pi\sqrt{-1} w$.  If $\tilde{\mathcal{L}}$ denotes the pull-back of $\mathcal{L}$ to $\tilde{\mathcal{X}}$, then the first Chern class $c_{1}(\tilde{\mathcal{L}})\in H^{2}(X, \mathbb{Z})$. 

 The family of polarized Calabi-Yau $n$-manifolds  $(\pi: \mathcal{X}^*  \rightarrow \Delta^*, \mathcal{L})$ gives  a polarized variation of Hodge structures of weight $n$. The Hodge filtrations are $F^{n}_{t}\subset \cdots \subset F^{0}_{t}=V$, for any $t\in \Delta^{*}$, where $V\subset H^{n}(X, \mathbb{C})$ is the primitive cohomology   with respect to $c_{1}(\tilde{\mathcal{L}})$, and $F^{n}_{t}=H^{n,0}(X_{t})$. The polarization is the Hodge-Riemann bilinear form $$Q(\phi, \psi)=(-1)^{\frac{n(n-1)}{2}}\int_{X} \phi\wedge \psi ,$$ for any $\phi $ and $ \psi \in V$,  which satisfies $$Q(F^{p},F^{n+1-p})=0, \  \  \ \ {\rm and } \  \  \  (-1)^{\frac{p-q}{2}}Q(\xi,\bar{\xi})>0$$ for any $\xi \in V \cap H^{p,q}(X_{t})$.

Let $T: H^{n}(X, \mathbb{Z}) \rightarrow H^{n}(X, \mathbb{Z})$ be the monodromy operator induced by the parallel transport of the local system $R^{n}\pi_{*}\mathbb{Z}$ along the loop generating $\pi_{1}(\Delta^{*})$. Since the polarization is invariant under parallel transports of $R^{2}\pi_{*}\mathbb{Z}$, $T$ acts on $V$.  Note that  $T$ is quasi-unipotent, and therefore, we assume that $T$ is unipotent by passing to a certain base change, i.e. $(T-I)^{k}=0$ for a $k\leq n+1$ (cf. \cite{Gr}).  We let $N=\log T: H^{n}(X, \mathbb{C}) \rightarrow H^{n}(X, \mathbb{C})$, which satisfies $N^{k}=0$ and $Q(N\phi, \psi)=-Q(\phi, N\psi)$ for any $\phi $,  $ \psi \in V$ (cf.  \cite{Gr}).

Since $h^{n,0}(X_t)=1$, we only consider  the period map $\mathcal{P}: \mathbb{H}\rightarrow \mathbb{P}(V)$ of the $n$th-flag, which  is defined by $\mathcal{P}(w)=\langle[\Theta_w]\rangle $, where $\Theta_w$ is a holomorphic volume form on $X_{t}$, and $\langle[\Theta_w]\rangle$ denotes the complex line in $ V$ determined by $[\Theta_w] $. If we define $\tilde{\Xi}: \mathbb{H}\rightarrow \mathbb{P}(V)$ by $\tilde{\Xi}(w)=e^{-w N}\mathcal{P}(w)$, then $\tilde{\Xi}$ descents to a map $\Xi: \Delta^{*} \rightarrow \mathbb{P}(V)$.  The Schmid's nilpotent orbit theorem asserts that $\Xi$  extends  to a holomorphic map from $\Delta$, denoted still by $\Xi:  \Delta  \rightarrow \mathbb{P}(V)$ (cf.  \cite{Sc}, Chapter IV in \cite{Gr},  and Section 16.3 in \cite{GHJ}).

 If $A: \Delta \rightarrow V$ is  a holomorphic map such that $A(t)\neq 0$ for any $t\in \Delta$, and $\langle A(t)\rangle \in \Xi(t)$, then $\langle e^{wN}A(t)\rangle\in \mathcal{P}(w)$, 
  for any $w\in \mathbb{H}$ with $t=\exp 2\pi\sqrt{-1}w$. Let $\Theta_{t}$ be a relative   holomorphic volume form on $\mathcal{X}^{*}$   such that
   $[\Theta_{t}]=e^{wN}A(t)$ on $X_{t}$.   Note that $A(t)=a_0+h(t)$ where $a_0=A(0)$ and $h(t)$ is a holomorphic vector valued  function on $\Delta$ with $|h(t)|\leq C|t|$ for a constant $C>0$. Here $|\cdot|$ denotes  a fixed   Euclidean norm  on the finite dimensional vector space $V$.    Therefore \begin{equation}\label{equ1.1} [\Theta_{t}]= a_0+\sum_{j=1}^{d}\frac{w^{j}}{j!}N^{j}a_0+e^{wN}h(t), \end{equation} where $d=\max \{l| N^{l}a_0\neq 0\}$.

 Since $N$ is anti-symmetric with respect to $Q$, we have  $Q(\cdot, e^{\bar{w}N} \cdot)=Q( e^{-\bar{w}N}\cdot,\cdot)$, and
  $$Q([\Theta_{t}],[\overline{\Theta}_{t}])  =  Q(e^{2\sqrt{-1}yN}A(t),\overline{A(t)})  = Q(e^{2\sqrt{-1}yN}a_0,\overline{a_0})+b(t) =p(y)+b(t),  $$ where $y={\rm Im}(w)=\frac{-\log|t|}{2\pi}$, and  $p(y)$ is a polynomial of $y$ with degree less or equal to $d$. Since $N^{n+1}h(t)=0$,
   $|b(t)|\leq C|t|(-\log |t|)^{n} $ for a certain constant $C>0$.
   In Section 1 of \cite{Wang1}, a detailed study of  the mixed Hodge structure  shows  that $Q(N^{d}a_0,\bar{a}_0)\neq 0$, and therefore the degree of   $p(y)$ is  $d$.   We obtain that  \[\begin{split}\sigma_t& =(-1)^{\frac{n^2}{2}}\int_{X_t}\Theta_t\wedge \overline{\Theta}_t =(-1)^{\frac{n}{2}}Q([\Theta_{t}],[\overline{\Theta}_{t}]) \\ & =(-1)^{\frac{n}{2}}p(-\log |t|)+(-1)^{\frac{n}{2}}b(t)\\ & \sim  (-\log |t|)^{d}+o(1). \end{split}\]

We define  $\Omega_{t}=\sigma_t^{-\frac{1}{2}} \Theta_t$, which satisfies
  $(-1)^{\frac{n^2}{2}}\int_{X_t}\Omega_t\wedge \overline{\Omega}_t=1$ and  $$ [\Omega_{t}]=\sigma_t^{-\frac{1}{2}} a_0+\sigma_t^{-\frac{1}{2}}\sum_{j=1}^{d}\frac{w^{j}}{j!}N^{j}a_0+\sigma_t^{-\frac{1}{2}}e^{wN}h(t) $$ by   (\ref{equ1.1}).
  If $d>0$,  then   $$ \sigma_t^{-\frac{1}{2}} |w^{d}N^{d}a_0|\sim  (-\log |t|)^{\frac{d}{2}}, \  \ \  \sigma_t^{-\frac{1}{2}} |w^{j}N^{j}a_0|\sim  (-\log |t|)^{j-\frac{d}{2}}, \  \  $$ $$ 0\leq j<d,   \  \  \  \ {\rm and} \  \ \ \  \sigma_t^{-\frac{1}{2}}|e^{wN}h(t)|\leq C(-\log |t|)^{n-\frac{d}{2}}|t|.$$ Thus $[\Omega_{t}]$ diverges   in $H^{n}(X,\mathbb{C})$.   If $d=0$, then when $t\rightarrow 0 $,
    $$\sigma_{t} \rightarrow  (-1)^{\frac{n^2}{2}} Q(a_0,\bar{a}_0)=\sigma_{0},  \ \  \ {\rm and}  \  \  \     [\Omega_{t}] \rightarrow  \sigma_{0}^{-\frac{1}{2}} a_0, \  \  \ {\rm in} \  \  \  H^{n}(X,\mathbb{C}). $$
    The calculation in Section 1 of \cite{Wang1} shows that   $d=0$ is equivalent to the finiteness of Weil-Petersson distance from the  origin $0$ to  $\Delta^{*}$.
   We obtain the conclusion.
\end{proof}

\section{Proof of Theorem \ref{thm1}}

Before we prove Theorem \ref{thm1}, we show  a result about the Gromov-Hausdorff convergence, which does not appear  explicitly in the literature.

  \begin{theorem}\label{prop2} Let $(\pi: \mathcal{X}\rightarrow \Delta, \mathcal{L})$ be  a  degeneration  of polarized Calabi-Yau manifolds,
  and $\omega_{t}$ be  the unique Ricci-flat  K\"{a}hler-Einstein   metric
in  $
c_{1}(\mathcal{L})|_{X_{t}}\in H^{1,1}(X_{t}, \mathbb{R})$. If the origin $0\in \Delta$ is at finite  Weil-Petersson distance from $\Delta^{*}$,
  then  when $t\rightarrow 0$,  $$(X_{t}, \omega_{t})
\stackrel{d_{GH}}\longrightarrow (Y, d_{Y}),  $$ in the Gromov-Hausdorff sense,  where    $(Y, d_{Y}) $ is a compact metric space, and is  homeomorphic to a Calabi-Yau variety.
 \end{theorem}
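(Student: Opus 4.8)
The plan is to reduce, via the solution of Wang's conjecture due to Tosatti and Takayama, to a family whose central fiber is already a Calabi--Yau variety, and then to invoke the Gromov--Hausdorff theory for degenerating polarized Calabi--Yau manifolds. By Proposition \ref{prop1}, finiteness of the Weil--Petersson distance is equivalent to the convergence $[\Omega_t]\to\beta$ in $H^{n}(X,\mathbb{C})$; by \cite{Wang2,To,Ta} this implies that, after a finite base change $z\mapsto z^{\nu}=t$ and a birational modification of the total space that is an isomorphism over $\Delta^{*}$, one obtains a relative minimal model $\pi': \mathcal{X}'\rightarrow \Delta$ with $\mathcal{X}'$ normal, $\mathcal{K}_{\mathcal{X}'/\Delta}$ $\mathbb{Q}$-trivial, at worst canonical singularities, and central fiber $X_{0}'$ a Calabi--Yau variety. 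Neither the base change nor the modification over $\Delta^{*}$ alters the fibers $X_{t}$ ($t\neq 0$), the classes $c_{1}(\mathcal{L})|_{X_{t}}$, or the metrics $\omega_{t}$, and Gromov--Hausdorff convergence as $t\to 0$ is insensitive to the reparametrization, so it suffices to treat the new family. One must also fix the polarization on the good model: the strict transform $\mathcal{L}'$ of $\mathcal{L}$ restricts to $c_{1}(\mathcal{L})|_{X_{t}}$ on general fibers, and after a further $\mathcal{L}'$-MMP step (a birational contraction internal to the central fiber) one may assume $\mathcal{L}'$ relatively ample, using that $X_{0}'$ retains canonical singularities and trivial dualizing sheaf, so that the contracted model remains a polarized degeneration with Calabi--Yau central fiber.

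Granting this reduction, I would assemble the analytic input. On the polarized good model, the cohomological volume $\int_{X_{t}}\omega_{t}^{n}=(c_{1}(\mathcal{L}')|_{X_{t}})^{n}$ is a fixed positive constant, so the family is volume non-collapsed; and by the uniform $L^{\infty}$ estimates for families of complex Monge--Amp\`ere equations (Eyssidieux--Guedj--Zeriahi, Di Nezza--Guedj, as used by Rong--Zhang, Tosatti and Takayama), the K\"{a}hler potentials of $\omega_{t}$ relative to a fixed background metric on $\mathcal{X}'$ are uniformly bounded; together with a uniform Sobolev inequality this yields $\operatorname{diam}(X_{t},\omega_{t})\leq C$ independently of $t$. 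Thus $\{(X_{t},\omega_{t})\}$ is a sequence of Ricci-flat $2n$-manifolds with uniformly bounded diameter and uniformly bounded, non-collapsing volume.

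Convergence and the structure of the limit then follow from Cheeger--Colding/Anderson theory together with the work of Donaldson--Sun: along any subsequence $(X_{t},\omega_{t})\stackrel{d_{GH}}{\longrightarrow}(Y,d_{Y})$, where $Y$ is a compact metric space that is a smooth Ricci-flat K\"{a}hler manifold off a closed singular set of real codimension at least $4$ and of finite Hausdorff measure, and $Y$ is homeomorphic to a normal projective variety $W$. To identify $W$ and obtain subsequence-independence, I would use the family structure as in the work of Rong--Zhang, Tosatti and Takayama: the algebraic degeneration of the fibers (realized in a relative Hilbert scheme through the embeddings induced by powers of $\mathcal{L}'$) together with the partial $C^{0}$-estimate forces $W\cong X_{0}'$, which is a Calabi--Yau variety by its normality, trivial dualizing sheaf and canonical singularities; and the limit metric is the singular Ricci-flat K\"{a}hler--Einstein metric $\omega_{KE,0}$ on $X_{0}'$ furnished by Eyssidieux--Guedj--Zeriahi. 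Hence the full limit exists, $(X_{t},\omega_{t})\stackrel{d_{GH}}{\longrightarrow}(X_{0}',d_{\omega_{KE,0}})$, a compact metric space homeomorphic to the Calabi--Yau variety $X_{0}'$.

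The main obstacle is the interface between the algebraic and metric pictures in the last step: showing that the Gromov--Hausdorff limit is exactly the algebraic central fiber of a good model, hence subsequence-independent, rather than merely some birational modification of it; this is where the Donaldson--Sun theory and the compatibility of the algebraic with the metric degeneration are essential. A close second is the uniform diameter bound, which rests on the uniform potential estimates for families of Monge--Amp\`ere equations. The reduction in the first paragraph, though conceptually the crux, is imported wholesale from \cite{Wang2,To,Ta}.
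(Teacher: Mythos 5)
Your outline takes the minimal--model--program route to produce a Calabi--Yau filling, and this is genuinely different from what the paper does; unfortunately it has two real gaps. First, the MMP results of \cite{Wang2,To,Ta} that you import wholesale require the degeneration to come from an algebraic (quasi-projective) family --- this is exactly the extra hypothesis appearing in item iii) of Theorem \ref{thm3} and in the Remark following the paper's proof --- and no such hypothesis is available in Theorem \ref{prop2}. Second, and more seriously, your step ``after a further $\mathcal{L}'$-MMP step one may assume $\mathcal{L}'$ relatively ample'' is unjustified and is in fact the crux of the whole problem. The relative minimal model is only $\mathcal{K}$-trivial; its central fiber need not admit any relatively ample extension of the polarization, the minimal model is not unique (any two differ by flops, \cite{Kaw}), and by the separatedness property at most one of these models can carry such an ample extension. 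Producing the one polarized filling whose central fiber is the metric limit is precisely what cannot be done ``internally to the central fiber'' by an MMP argument; without it you cannot invoke \cite{RoZ1,RoZ2} (which require a relatively ample line bundle restricting to the polarization class on the nearby fibers), nor identify the Gromov--Hausdorff limit with the algebraic central fiber, so the subsequence-independence and the homeomorphism statement do not follow.

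The paper circumvents both issues by building the polarized filling metrically rather than birationally: the diameter bound comes directly from finite Weil--Petersson distance (Corollary 1.7 of \cite{Ta}), Gromov precompactness gives a limit $Y$ along a subsequence, Donaldson--Sun theory \cite{DS} realizes $Y$ as a normal projective variety $X_\infty$ with trivial canonical sheaf (hence a Calabi--Yau variety) inside a Hilbert scheme, and then a key algebraic lemma --- that $\overline{O}_p\cap\mathcal{H}^{o}=O_p\cap\mathcal{H}^{o}$, proved via the local Torelli theorem for Calabi--Yau manifolds --- combined with Takayama's orbit-closure variety $\mathcal{A}$ yields a genuinely polarized degeneration $(\pi':\mathcal{X}'\to\Delta,\mathcal{L}')$ with central fiber $X_\infty$ and general fibers isomorphic to the $X_t$ with $\mathcal{L}'|_{X'_s}\cong L_t^{m}$. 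Only then are \cite{RoZ1,RoZ2} applied to get convergence for all $t\to 0$. If you want to salvage your approach, you would need either to add the quasi-projectivity hypothesis and prove the existence of the polarized minimal model (essentially reproving the separatedness/ample-extension step), or to replace the first paragraph of your argument by a construction of the filling along the lines of the Donaldson--Sun plus Hilbert-scheme argument above.
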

 
 This theorem could  be proved by the argument  in the proof of  Lemma 6.9 in  \cite{LWX}, and we provide an  independent proof here. 

\begin{proof} By Corollary 1.7 in \cite{Ta},
 the diameters $${\rm diam}_{\omega_t}(X_t)\leq D,$$ for a constant $D>0$ indpendent of $t$.
 The  Gromov precompactness theorem (cf. \cite{Gr,Fu}) asserts that for any sequence $t_k\rightarrow 0$,  $$(X_{t_{k}}, \omega_{t_k})
\stackrel{d_{GH}}\longrightarrow (Y, d_{Y}),  $$ by passing to a subsequence, in the Gromov-Hausdorff sense, where $Y$
 is a compact metric space.  Now we improve  the convergence to along $t\rightarrow 0$, i.e. without passing to any subsequences.  We follow the arguments in the proof of Theorem 1.4 in  \cite{Ta}.

Let $P$ be the   Hilbert polynomial of the general fibers, i.e.  $P=P(\mu)=\chi (X_t,L_t^{\mu})$ where $L_t=\mathcal{L}|_{X_t}$.
By Matsusaka's Big Theorem (cf. \cite{Mat}),    there is an  $m_{0}>0$ depending only on $P$    such that  for any $m\geqslant m_{0}$,  $L_t^{m}$ is very ample, and $H^{i}(X_t,L_t^{m})=\{0\}$, $i>0$,  $t\in\Delta^{*}$.
 A basis  $\Sigma$ of $H^{0}(X_t,L_t^{m})$ induces  an embedding $\Phi_{t}: X_{t} 
 \hookrightarrow \mathbb{CP}^{N}$ such that  $L_t^{m}=\Phi_{t}^{*}\mathcal{O}_{\mathbb{CP}^{N}}(1)$. We   regard $\Phi_{t}(X_t)$ as a   point in the Hilbert scheme  $\mathcal{H}il_{N}^{P_{m}}$ parametrizing the subshemes of $\mathbb{CP}^{N}$ with   Hilbert polynomial $P_{m}(\mu)=\chi (X_t,L_t^{m \mu})$, where $N=P_{m}(1)-1$.  For any other choice $\Sigma'$,   $\Phi_{t}'(X_t)=\varrho (u, \Phi_{t}(X_t))$ for a $u\in SL(N+1)$ where $\varrho : SL(N+1) \times \mathcal{H}il_{N}^{P_{m}}  \rightarrow  \mathcal{H}il_{N}^{P_{m}}$ is the  $SL(N+1)$-action on $\mathcal{H}il_{N}^{P_{m}}$ induced by the natural $SL(N+1)$-action on $\mathbb{CP}^{N}$ (See \cite{Vie} for the background knowledge). We choose  $m \gg 1$ such that $\mathcal{L}^{m}$ is relative ample on $\mathcal{X}^{*}=\mathcal{X}\backslash X_0$.  

 Theorem 1.2 of \cite{DS}   asserts that by taking  $m\gg 1$, we have a subsequence $X_{t_{k}}$ satisfying  the following.  For any $t_k$,  there is an orthonormal basis $\Sigma_{k}$ of $H^{0}(X_{t_k}, L_{t_k}^{m})$ with respect to the $L^{2}$-norm induced by the Hermitian metric $H_k$ on $L_{t_k}$  giving
  $\omega_{t_k}$, i.e. $\omega_{t_k}=-\sqrt{-1}\partial \overline{\partial}\log H_k$,   which defines an embedding $\Phi_{t_{k}}: X_{t_k} \hookrightarrow \mathbb{CP}^{N}$ with $L_{t_k}^{m}= \Phi_{t_{k}}^{*} \mathcal{O}_{\mathbb{CP}^{N}}(1)$. And  $\Phi_{t_{k}}( X_{t_k})$ converges to $X_{\infty}$ in the reduced  Hilbert scheme $(\mathcal{H}il_{N}^{P_{m}})_{red}$ with respect to the natural analytic topology. Furthermore $X_{\infty}$ is homeomorphic to the Gromov-Hausdorff limit $Y$.
By   Proposition 4.15 of  \cite{DS},   $X_{\infty}$ is  a projective normal variety with only log-terminal singularities.  Note that we can choose  holomorphic volume forms $\Omega_{t_k}$  converging   to a holomorphic volume form $\Omega_{\infty}$ on the regular locus $X_{\infty,reg}$ along the Gromov-Hausdorff convergence. Thus the canonical  sheaf  $\mathcal{K}_{X_{\infty}}$ is trivial, i.e.  $\mathcal{K}_{X_{\infty}}\cong \mathcal{O}_{X_{\infty}}$, and $X_{\infty}$ is 1--Gorenstein, which implies that $X_{\infty}$ has at worst  canonical singularities.    Consequently,  $X_{\infty}$ is  a Calabi-Yau variety.

For any $p \in \mathcal{H}il_{N}^{P_{m}}$, we denote $O_p$ the $SL(N+1)$-orbit, i.e. $O_p=\{\varrho (u, p)| \forall u \in SL(N+1)\}$, and $\overline{O}_p$ the Zariski closure of $O_p$ in $ \mathcal{H}il_{N}^{P_{m}}$.  If $\mathcal{H}^{o}\subset  \mathcal{H}il_{N}^{P_{m}}$ denotes  the open subscheme parameterizing smooth projective manifolds with  Hilbert polynomial $ P_{m}$, then $\overline{O}_p \cap \mathcal{H}^{o}$ is clearly  closed in $\mathcal{H}^{o}$, which works as the following.

 Let $q \in \overline{O}_p \cap \mathcal{H}^{o}$, and $\iota : \Delta \rightarrow \overline{O}_p$ such that $q=\iota(0)$ and $\iota (\Delta^{*})\subset O_p $.  We obtain a family of Calabi-Yau manifolds $\mathcal{Z}\rightarrow \Delta$ as the base change, i.e. $\mathcal{Z}=\mathcal{U}^{P_m}\times_{\mathcal{H}il_{N}^{P_{m}}}\Delta$, where $\mathcal{U}^{P_m} \rightarrow \mathcal{H}il_{N}^{P_{m}}$ denotes the universal family. Note that all fibers $Z_{z}$, $z\in \Delta^{*}$, are isomorphic to each other as $\iota (z)\in O_p$ for any $z\neq 0$.  Thus the image of the period map  $\mathcal{P}:\Delta \rightarrow \mathcal{D}$ is one point,  where $\mathcal{D}$ denotes the  classifying space for the polarized Hodge structure of weight $n$ (cf. \cite{Gr}). The differential of the period map
 $d \mathcal{P}_{z}: T_{z} \Delta \to T_{\mathcal{P}(z)}(\mathcal{D})$
 is a composition of the Kodaira-Spencer map $T_{z}\Delta  \to H^{n-1,1}(Z_{z})$
 and a   map $\eta: H^{n-1,1}(Z_{z}) \to T_{\mathcal{P}(z)}( \mathcal{D})$ (cf. Chapter III in \cite{Gr} and Section 16.2 in \cite{GHJ}). The local  Torelli theorem for Calabi-Yau manifolds says that $\eta$ is injective
  (cf. Proposition 3.6 in  \cite{Gr2}), and thus the Kodaira-Spencer map  of $\mathcal{Z}\rightarrow \Delta$ is trivial. Therefore all fibers $Z_{z}$, $z\in \Delta$, are biholomorphic to each other, denoted by $Z$.  Since $Z$ is simply connected, any restriction $\mathcal{O}_{\mathbb{CP}^{N}}(1)|_{Z_z}$ is isomorphic to  the same ample line bundle $L_Z$, and any $Z_z\subset \mathbb{CP}^{N}$ is the image of the embedding given by a basis of $H^0(Z, L_Z)$.  Hence $q=\iota(0) \in O_p$, and $\overline{O}_p \cap \mathcal{H}^{o}=O_p \cap \mathcal{H}^{o}$.

  Now we continue the proof. By the universal property of the universal  family $\mathcal{U}^{P_m} \rightarrow \mathcal{H}il_{N}^{P_{m}}$, we have a morphism $\lambda: \Delta \rightarrow \mathcal{H}il_{N}^{P_{m}}$ such that $\pi: \mathcal{X}\rightarrow \Delta$ is the pull-back family of $\mathcal{U}^{P_m}$, i.e. the base change $\mathcal{X}=\mathcal{U}^{P_m}\times_{\mathcal{H}il_{N}^{P_{m}}}\Delta$.   The Zariski closure of orbits 
  $$\mathcal{A}=\overline{\bigcup_{t\in \Delta}\{t\}\times O_{\lambda (t)}}\subset \Delta \times \mathcal{H}il_{N}^{P_{m}}$$ is studied in the  part 4) of  the proof of Theorem 1.4 in \cite{Ta}. It is proved in \cite{Ta} that $\mathcal{A}$ is a  irreducible and projective variety  over $\Delta$,  and if $\mathfrak{a}: \mathcal{A} \rightarrow \Delta$ is the restriction of the natural projection,  the  fiber $\mathfrak{a}^{-1}(t)=\{t\}\times \overline{O}_{\lambda (t)}$ for any $t\in\Delta^{*}$. However the central fiber $\mathfrak{a}^{-1}(0)$ may  be reducible.  If we let $$\mathfrak{a}^{o}=\mathfrak{a}|_{\mathcal{A}^{o}}: \mathcal{A}^{o}=\mathcal{A}\cap (\Delta^{*}\times \mathcal{H}^{o})\rightarrow\Delta^{*}, $$ then $\mathcal{A}^{o}$ is a Zariski open set of $\mathcal{A}$, and $$\mathfrak{a}^{o,-1}(t)=\{t\}\times (\overline{O}_{\lambda (t)}\cap  \mathcal{H}^{o})=\{t\}\times ( O_{\lambda (t)}\cap  \mathcal{H}^{o}),  \  \  \  t\in\Delta^{*}. $$

  Note that $(t_k,\Phi_{t_{k}}( X_{t_k}))\in \mathcal{A}^{o}$ and $(0, X_\infty) \in \mathfrak{a}^{-1}(0) \subset  \mathcal{A}$. But $(0, X_\infty)$ may not belong to $ O_{\lambda (0)}$.     By the part 5) in the proof of Theorem 1.4 in \cite{Ta},  we find a morphism $\nu : \Delta \rightarrow \mathcal{A}$ such that $\nu (0)=(0, X_\infty)$, $ \nu (\Delta^{*})\subset \mathcal{A}^{o}$,  and the composition $\mathfrak{a}\circ \nu$ is a finite map  given by $s \mapsto t=s^{l}$ by shrinking $\Delta$ if necessary.  We denote  the pull-back family $$\pi': \mathcal{X}'= (\mathcal{U}^{P_m}\times_{\mathcal{H}il_{N}^{P_{m}}}\Delta)_{red}\rightarrow \Delta$$ by $\mathfrak{p}\circ \nu$, where $\mathfrak{p}: \mathcal{A} \rightarrow \mathcal{H}il_{N}^{P_{m}}$ is the restriction of the natural projection map.  
Let  $\mathcal{L}'$ be the pull-back bundle of $\mathfrak{O}_{\mathbb{CP}^{N}}(1)$, which is a relative very  ample line bundle on $ \mathcal{X}'$, where $\mathfrak{O}_{\mathbb{CP}^{N}}(1)$ is the line bundle on $\mathcal{U}^{P_m}$ induced by  $\mathcal{O}_{\mathbb{CP}^{N}}(1)$.  Note that the central fiber $X_{0}'=\pi'^{-1}(0)=X_\infty$, and for any $t=s^l$,  $X_{s}'$ is isomorphic to $X_t$, since  $\nu(s)$  belongs  to $\mathfrak{a}^{o,-1}(t)=\{t\}\times O_{\lambda (t)}$.  More explicitly, the isomorphism is given by an element  $u_s \in SL(N+1)$ such that $\varrho (u_s, \mathfrak{p}\circ \nu (s) )= \lambda (t)$. The restricted bundle $\mathcal{L}'|_{X_s'}\cong L_t^m$.

    We have a new polarized degeneration of Calabi-Yau manifolds $(\pi': \mathcal{X}'\rightarrow \Delta, \mathcal{L}')$ with  a Calabi-Yau variety $X_\infty$ as the central fiber.  Since   $X_{\infty}$ is  normal,    the total space $\mathcal{X}'$ is normal, and thus
      the relative canonical  sheaf $\mathcal{K}_{\mathcal{X}'/ \Delta}$  is defined, i.e. $\mathcal{K}_{\mathcal{X}'/ \Delta}\cong \mathcal{K}_{\mathcal{X}'}\otimes \pi'^{*}\mathcal{K}_{\Delta}^{-1}$,  and is  trivial, i.e. $\mathcal{K}_{\mathcal{X}'/ \Delta}\cong \mathcal{O}_{\mathcal{X}'}$.  If $\omega_s'$ is the unique Ricci-flat K\"{a}hler-Einstein metric presenting $c_{1}(\mathcal{L}'|_{X_s'})$, then $\omega_s'=m\omega_t$ after we identify $X_s'$ and $X_t$ with $t=s^l$.
      In this case, the convergence of $\omega_{s}' $ is studied in \cite{RuZ,RoZ1,RoZ2}.
 It is proved in \cite{RoZ1} that $$  F_{s}^{*}\omega_{s}' \rightarrow  \omega, \  \ \ {\rm
when} \ \
 s \rightarrow 0,$$
 in the $C^{\infty}$-sense on  any compact subset $K $ belonging to the regular part $
X_{0,reg}'$ of $X_0'$,  where $F_{s}: X_{0,reg}' \rightarrow
X_{s}'$ is a smooth  family of embeddings with $F_{0}={\rm Id}_{X_{0}'}$, and  $\omega$ is   a  Ricci-flat K\"{a}hler-Einstein metric   on $X_{0,reg}'$ with    $\omega\in
c_{1}(\mathcal{L'})|_{X_{0}'}$, which was obtained previously in \cite{EGZ}.  Furthermore, \cite{RoZ2} proves  $$(X_{s}', \omega_{s}')
\stackrel{d_{GH}}\longrightarrow (Y', d_{Y'}),  $$ when $s\rightarrow 0$, in the Gromov-Hausdorff sense, where    $(Y', d_{Y'}) $   is the metric completion of $(X_{0,reg}', \omega)$, which
 is a compact metric space.   Note that $t\rightarrow 0$ if and only if $s\rightarrow 0$, and $Y$ is homeomorphic to $Y'$, and $d_{Y'}=\sqrt{m}d_Y$. We obtain the conclusion.
      \end{proof}

      \begin{remark} One crucial  step  in the proof is to replace the  original  degeneration  $(\pi: \mathcal{X}\rightarrow \Delta, \mathcal{L})$  by   a new one $(\pi': \mathcal{X}'\rightarrow \Delta, \mathcal{L}')$, which satisfies that  $\mathcal{X}'$ contains all smooth fibers of $\mathcal{X}$, and the new central fiber $X_0'$ is a Calabi-Yau variety. Furthermore,  under the  identification of two general fibers   $X_t \cong X_s'$, $t=s^l$, we have $ \mathcal{L}^{m}|_{X_t}\cong \mathcal{L}'|_{X_s'}$. By Corollary 2.3 in \cite{zha1}, the Calabi-Yau variety $X_0'$ is the unique choice as the filling-in in the following sense. If $(\pi'': \mathcal{X}''\rightarrow \Delta, \mathcal{L}'')$ is an another   degeneration with the  Calabi-Yau  variety    $X_{0}''$ as the central fiber, and if  there is a sequence of points  $s_{k} \rightarrow 0$ in $\Delta$ such that  $ X_{s_{k}}'\cong  X_{s_{k}}''$ and  $\mathcal{L}'|_{X'_{s_{k}}}\cong  \mathcal{L}''|_{X_{s_{k}}''}$, then $X_{0}'$ is isomorphic to $X_{0}''$ (see also  \cite{Bo,LWX,Od,SSY}).  Such property is called the separatedness  condition, and is used to construct certain completions of moduli spaces (\cite{LWX,SSY,zha1,Zh}).

     The other way to find a Calabi-Yau variety as the  filling-in is to use the minimal model program as proposed by Wang (\cite{Wang2}), and carried out in \cite{To,Ta}.   In this case,
      we further assume that   the degeneration $ (\mathcal{X}\rightarrow \Delta, \mathcal{L})$ comes from a quasi-projective family.
  More explicitly, there is a flat family of polarized $n$-varieties $( \mathfrak{X}\rightarrow C, \mathfrak{L})$ over a smooth curve $C$ with a marked point $y$ such that both of $\mathfrak{X}$ and $C$ are quasi-projective,
  $\mathcal{X}$ is the pull-back family  of $\mathfrak{X}$ for  an embedding $ \Delta \hookrightarrow C$ mapping $0$ to  $y$,  and $\mathcal{L}$ is the pull-back bundle of $\mathfrak{L}$.  All examples the author know satisfy this assumption.

  Now we follow the arguments in \cite{To,Ta}, and by taking the  Mumford's semi-stable reduction, i.e. a sequence of base changes and blow-ups, we  obtain a degeneration $\tilde{\pi}: \tilde{\mathcal{X}}\rightarrow \Delta$ with a normal crossing central fiber $\tilde{X}_0$ and $\tilde{\mathcal{X}}\backslash \tilde{X}_{0}$ being a base change of $\mathcal{X}\backslash X_0$ by $s \mapsto s^j=t$.
   Then  \cite{To,Ta} use the recent results in the minimal model program, for example \cite{Fuj,NX}, to show that   $\tilde{\mathcal{X}}\rightarrow \Delta$ is birational to a family $\tilde{\mathcal{X}}'\rightarrow \Delta$ such that $\tilde{\mathcal{X}}\backslash \tilde{X}_{0}\cong \tilde{\mathcal{X}}'\backslash \tilde{X}_{0}' $, and the relative canonical divisor $\mathcal{K}_{\tilde{\mathcal{X}}'/ \Delta}$ is $\mathbb{Q}$-Cartier, and $\mathbb{Q}$-linearly trivial. A further argument proves that $\mathcal{K}_{\tilde{\mathcal{X}}'/ \Delta}$ is Cartier, and linearly trivial (see the proof of Theorem 1.2 in \cite{To} for details).  Thus we have a Calabi-Yau variety $\tilde{X}_{0}' $ as the central filling-in.

It is known that the minimal model $\tilde{\mathcal{X}}'$ is not uniquely chosen, and it is proved in \cite{Kaw} that  any other choice $ \tilde{\mathcal{X}}'' $ connects to $\tilde{\mathcal{X}}'$ by a sequence of flops. Therefore the central Calabi-Yau  variety $\tilde{X}_{0}' $ obtained by the minimal model program  is not unique. Comparing to the unique chosen $X_0'$ in the proof of Theorem \ref{prop2}, what happens is the following. If  $\tilde{\mathcal{L}}$ is  the pull-back bundle of $\mathcal{L}$ on $\tilde{\mathcal{X}}\backslash \tilde{X}_{0}$ and therefore on $ \tilde{\mathcal{X}}'\backslash \tilde{X}_{0}' $, then $\tilde{\mathcal{L}}$ is relative ample. If one minimal model $\tilde{\mathcal{X}}'$ allows that  $\tilde{\mathcal{L}}$ extends to a relative ample line bundle on  $\tilde{\mathcal{X}}'$   crossing the central fiber $\tilde{X}_0'$ after taking a certain power, then $\tilde{\mathcal{X}}'$ is the only minimal model among many possible choices allowing the ample extension of   $\tilde{\mathcal{L}}$ by the separatedness condition (cf.  Theorem 2.1 in  \cite{Bo}). And this uniquely chosen  $\tilde{X}_0'$ would coincide with the Gromov-Hausdorff limit of the Ricci-flat K\"{a}hler-Einstein metrics representing the polarization on the nearby fibers.   \end{remark}

Now we are ready to prove Theorem \ref{thm1}.

  \begin{proof}[Proof of Theorem \ref{thm1}]   Firstly, we show  that i) implies ii).  By  Proposition \ref{prop1},  $0\in \Delta $ is at finite Weil-Petersson distance from $\Delta^{*}$, and thus ii) is the consequence of Theorem  \ref{prop2}. 

  Now if we assume that ii) is true, then clearly there is a diameter upper  bound, i.e. ${\rm diam}(\omega_{t})\leq D$ for a constant $D>0$, which is equivalent to the volume non-collapsing property, i.e. for any $r< {\rm diam}(\omega_{t})$, $${\rm Vol}_{\omega_{t}}(B_{\omega_{t}}(r))\geq \kappa r^{2n},$$ for a constant $\kappa >0$, by the Bishop-Gromov comparison theorem, and the polarization condition $${\rm Vol}_{\omega_{t}}(X_{t})=\frac{1}{n!}\int_{X_t} c_{1}(\mathcal{L})^{n}>0. $$ By
  Theorem 1.4 of \cite{Ta},
    $0\in \Delta $ is at finite Weil-Petersson distance from $\Delta^{*}$.  We obtain i) by Proposition \ref{prop1}.
   \end{proof}

\section{Proof of Theorem \ref{thm2} }

 \begin{proof}[Proof of Theorem \ref{thm2}]
 Firstly, the $C^{0}$-estimate in Section 3 of \cite{RoZ1} shows that ii) implies i).  Now we assume that i) is true, and we denote  $\omega_{t}^{o}=\Phi_{t}^{*}\omega_{FS} \in c_{1}(\mathcal{L}^{m})|_{X_{t}}$ on $X_{t}$, $t\in\Delta^{*}$.

 Let $\Psi_t$ be a nowhere vanishing section of $\mathcal{K}_{\mathcal{X}/\Delta}$, i.e. the divisor ${\rm div} (\Psi_t)=0$. Note that the  codimension of  the singular set $S_{\mathcal{X}}$ of $\mathcal{X}$ is  bigger or equal to 2, since $\mathcal{X}$ is normal, and any irreducible component of $X_0$ has multiplicity one as $X_0$ is reduced.  If $p\in  \mathcal{X}_{reg} \cap X_{0,reg}$, where $\mathcal{X}_{reg}=\mathcal{X}\backslash S_{\mathcal{X}}$ and $X_{0,reg} $ denotes the regular set of $X_0$, then there is a neighborhood $U\subset \mathcal{X}_{reg}$ of $p$ such that $U\cap X_{0,reg} \subset X_{0,reg} $, and there are coordinates $z_{0}, z_{1}, \cdots, z_{n}$ on $U$ satisfying that $X_{t}\cap U=\{z_{0}=t\}$, and   $z_{1}, \cdots, z_{n}$ are coordinates on $X_{t}\cap U$. Therefore, there is a nowhere vanishing holomorphic function $h_{U}$ on $U$ such that $\Psi_t=h_{U}dz_{1}\wedge \cdots \wedge dz_{n}$. Since $\omega_{t}^{o}$ is smooth on $U$, we have
\begin{equation}\label{equ3.1}  (-1)^{\frac{n^2}{2}}\Psi_t \wedge \overline{\Psi}_t \leq C_{U} (\omega_{t}^{o})^{n},
  \end{equation}
 on $X_{t}\cap U'$ for a constant $C_{U}>0$, and a smaller $U' \subset U$.

 The Ricci-flat condition is equivalent to that the potential function $\varphi_t$ satisfies the Monge-Amp\`{e}re equation \begin{equation}\label{equ3.2}\omega_{t}^{n}=(\omega_{t}^{o}+ \sqrt{-1}\partial\overline{\partial}\varphi_{t})^{n}=e^{\rho_t} (-1)^{\frac{n^2}{2}}\Psi_t \wedge \overline{\Psi}_t,  \end{equation} where $\rho_t$ is a constant function when restricted on $X_{t}$.    The argument in Section 3 of \cite{RoZ1} shows a generalized Yau-Schwartz lemma, i.e.  \begin{equation}\label{equ3.3} \omega_{t}^{o}\leq C  \omega_{t}  \end{equation} for a constant $C>0$ independent of $t$.  The proof is as the following.
  If
 $\Phi_{t}: (X_{t}, \omega_{t}) \rightarrow  (\mathbb{CP}^{N},
\omega_{FS})$ is  the inclusion   map induced by
$\mathcal{X}\subset \mathbb{CP}^{N}\times\Delta $, the Chern-Lu
inequality says
$$\Delta_{\omega_{t}}\log |\partial \Phi_{t}|^{2}\geq \frac{{\rm Ric}_{\omega_{t}}(\partial \Phi_{t},
\overline{ \partial \Phi_{t}})}{|\partial \Phi_{t}|^{2}}-\frac{{\rm
Sec}(\partial \Phi_{t}, \overline{
\partial \Phi_{t}},
\partial \Phi_{t},
\overline{ \partial \Phi_{t}})}{|\partial \Phi_{t}|^{2}},
$$  where ${\rm
Sec}$ denotes  the holomorphic bi-sectional curvature of
$\omega_{FS}$ (cf.  \cite{Y3}). Note that
$\Phi_{t}^{*}\omega_{FS}=\omega_{t}^{o} $,  $|\partial
\Phi_{t}|^{2}={\rm
tr}_{\omega_{t}}\omega_{t}^{o}=n-\Delta_{\omega_{t}}\varphi_{t}$
and ${\rm Ric}_{\omega_{t}}=0$. Thus we have that
 $$\Delta_{\omega_{t}}(\log {\rm tr}_{\omega_{t}}\omega_{t}^{o}-2\overline{R}\varphi_{t})
\geq -2\overline{R}n+\overline{R}{\rm
tr}_{\omega_{t}}\omega_{t}^{o}.
$$ where $\overline{R}$ is  a constant  depending only the upper bound of ${\rm
Sec} $. By the  maximum principle, there is an $x\in X_{t}$ such that
${\rm tr}_{\omega_{t}}\omega_{t}^{o}(x)\leq 2n$, and
 $${\rm tr}_{\omega_{t}}\omega_{t}^{o}\leq 2ne^{2\overline{R}(\varphi_{t}-\varphi_{t}(x))}\leq C,  $$ by the assumption  i),
 where $C>0$ is  a constant  independent of $t$, and we obtain (\ref{equ3.3}).

By (\ref{equ3.2})  and (\ref{equ3.3}),  we have $$ C^{-n}( \omega_{t}^{o})^{n}  \leq  \omega_{t}^{n}=e^{\rho_t} (-1)^{\frac{n^2}{2}}\Psi_t \wedge \overline{\Psi}_t ,$$ and after we restrict this inequality on $U$, (\ref{equ3.1}) asserts that $$e^{\rho_t}\geq C_{1}$$ for a constant $C_{1}$ independent of $t$.  We have $$C_{1} \int_{X_{t}} (-1)^{\frac{n^2}{2}}\Psi_t \wedge \overline{\Psi}_t\leq e^{\rho_t} \int_{X_{t}} (-1)^{\frac{n^2}{2}}\Psi_t \wedge \overline{\Psi}_t =  \int_{X_{t}} \omega_{t}^{n}= \int_{X_{t}}c_{1}(\mathcal{L}^{m})^{n},   $$ and  we obtain  ii) by Corollary 1.5 of \cite{Ta}.

Under the assumption ii),   \cite{RoZ2} proves  $$(X_{t}, \omega_{t})
\stackrel{d_{GH}}\longrightarrow (Y, d_{Y}),  $$ when $t\rightarrow 0$, in the Gromov-Hausdorff sense, where    $(Y, d_{Y}) $   is the metric completion of $(X_{0,reg}, \omega)$, which
 is a compact metric space (See also the survey paper \cite{Zh} for more discussions  of Gromov-Hausdorff topology in the current  circumstances).  Therefore ii) implies iii).

 If we view iii) as the assumption, then Theorem \ref{thm1} and Proposition \ref{prop1} show that the origin $0\in \Delta$ is at finite  Weil-Petersson distance from $\Delta^{*}$. We obtain ii) by Theorem 1.3 in \cite{Ta}.
 \end{proof}

 Finally, we collect some earlier results mainly in \cite{Wang1,Wang2,To,Ta,RoZ1} for the  reader's convenience, and refer  readers to these papers for more detailed discussions.

  \begin{theorem}[\cite{Wang1,Wang2,Ta,To}]\label{thm3}   Let $(\pi: \mathcal{X}\rightarrow \Delta, \mathcal{L})$ be  a  degeneration  of polarized Calabi-Yau manifolds, and $\omega_{t}$ be  the unique Ricci-flat  K\"{a}hler-Einstein   metric
in  $
c_{1}(\mathcal{L})|_{X_{t}}\in H^{1,1}(X_{t}, \mathbb{R})$.
  Then the  following statements are equivalent.
 \begin{itemize}
  \item[i)]  The origin $0\in \Delta$ is at finite  Weil-Petersson distance from $\Delta^{*}$.
   \item[ii)]   The diameters $${\rm diam}_{\omega_t}(X_t)\leq D,$$ for a constant $D>0$ indpendent of $t$.
    \item[iii)] If we further assume that $\mathcal{X}\rightarrow \Delta$ comes from a quasi-projective family,  then by passing to a finite base change,  $\mathcal{X}\rightarrow \Delta$ is birational to a new family $\mathcal{X}'\rightarrow \Delta$ such that the new  central fiber $X_0'$ is a Calabi-Yau variety, and $\mathcal{X}\backslash X_0 \cong \mathcal{X}'\backslash X_0'$.   \end{itemize}
 \end{theorem}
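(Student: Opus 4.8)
The plan is to assemble Theorem \ref{thm3} from the material already recalled, together with \cite{Wang1,Wang2,To,Ta}: since the statement is a compilation of known results, the proof will be an organization of citations rather than a new argument. I would first establish the equivalence $\mathrm{i)}\Leftrightarrow\mathrm{ii)}$, which holds with no extra hypothesis, and then $\mathrm{i)}\Leftrightarrow\mathrm{iii)}$ under the quasi-projective assumption, closing the cycle $\mathrm{iii)}\Rightarrow\mathrm{i)}\Rightarrow\mathrm{ii)}\Rightarrow\mathrm{i)}\Rightarrow\mathrm{iii)}$.

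For $\mathrm{i)}\Rightarrow\mathrm{ii)}$ I would cite Corollary 1.7 of \cite{Ta}, already invoked in the proof of Theorem \ref{prop2}: the finiteness of the Weil--Petersson distance forces $\mathrm{diam}_{\omega_t}(X_t)\leq D$ uniformly in $t$. For the converse $\mathrm{ii)}\Rightarrow\mathrm{i)}$ I would reuse the argument from the second half of the proof of Theorem \ref{thm1}: a uniform diameter bound together with the fixed volume $\mathrm{Vol}_{\omega_t}(X_t)=\frac1{n!}\int_{X_t}c_1(\mathcal{L})^n>0$ gives, by the Bishop--Gromov comparison theorem, a uniform non-collapsing estimate $\mathrm{Vol}_{\omega_t}(B_{\omega_t}(r))\geq\kappa r^{2n}$, and then Theorem 1.4 of \cite{Ta} places $0$ at finite Weil--Petersson distance from $\Delta^{*}$.

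For $\mathrm{iii)}\Rightarrow\mathrm{i)}$ I would observe that replacing $\mathcal{X}\to\Delta$ by a birational family with $\mathcal{X}\setminus X_0\cong\mathcal{X}'\setminus X_0'$, and performing the finite base change $s\mapsto s^j=t$, changes neither the punctured family over $\Delta^{*}$ nor the finiteness of the Weil--Petersson distance to the origin (the Weil--Petersson metric depends only on the variation of Hodge structure over $\Delta^{*}$, and finiteness of the distance is unaffected by replacing $t$ by $s^j$). Since the new central fiber $X_0'$ is a Calabi-Yau variety, the main result of \cite{Wang1}, recalled here through Proposition \ref{prop1}, yields finite Weil--Petersson distance, hence $\mathrm{i)}$.

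The hard part will be $\mathrm{i)}\Rightarrow\mathrm{iii)}$, which is Wang's conjecture \cite{Wang2} and the deepest input. Assuming the degeneration comes from a quasi-projective family, I would first apply Mumford's semistable reduction --- a finite base change $s\mapsto s^j=t$ followed by blow-ups --- to produce $\tilde\pi:\tilde{\mathcal{X}}\to\Delta$ with a simple normal crossing central fiber and $\tilde{\mathcal{X}}\setminus\tilde X_0$ a base change of $\mathcal{X}\setminus X_0$. Then, following \cite{To,Ta} and using recent advances in the minimal model program (e.g.\ \cite{Fuj,NX}), I would run a relative MMP to replace $\tilde{\mathcal{X}}$ by a birational family $\tilde{\mathcal{X}}'\to\Delta$, isomorphic to $\tilde{\mathcal{X}}$ over $\Delta^{*}$, whose relative canonical divisor $\mathcal{K}_{\tilde{\mathcal{X}}'/\Delta}$ is $\mathbb{Q}$-Cartier and $\mathbb{Q}$-linearly trivial; a further step, as in the proof of Theorem 1.2 of \cite{To}, would upgrade this to Cartier and linearly trivial, so that $\tilde X_0'$ is a Calabi-Yau variety, establishing $\mathrm{iii)}$. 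The place where the finiteness hypothesis --- equivalently, the Hodge-theoretic vanishing $d=0$ in the proof of Proposition \ref{prop1} --- actually enters is in controlling the monodromy and the limit mixed Hodge structure so that the output of the MMP has reduced, and in fact normal, Gorenstein, canonical, central fiber rather than a worse degenerate one; this is the technical heart, and for it I would rely entirely on \cite{To,Ta}. This closes the cycle and proves the theorem.
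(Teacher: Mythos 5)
Your proposal is correct and follows essentially the same route as the paper, which states Theorem \ref{thm3} as a compilation of known results: the equivalence i)$\Leftrightarrow$ii) via Corollary 1.7 and Theorem 1.4 of \cite{Ta} together with Bishop--Gromov non-collapsing is exactly the argument used in the proofs of Theorem \ref{prop2} and Theorem \ref{thm1}, iii)$\Rightarrow$i) is the main theorem of \cite{Wang1}, and i)$\Rightarrow$iii) is the semistable reduction plus minimal model program argument of \cite{To,Ta} recalled in the remark following Theorem \ref{prop2}. No gaps beyond the intended reliance on those references.
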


  \begin{theorem} [\cite{Wang1,Wang2,To,Ta,RoZ1}]\label{thm4} Let $(\pi: \mathcal{X}\rightarrow \Delta, \mathcal{L})$ be a  degeneration of polarized Calabi-Yau $n$-manifolds such that $\mathcal{X}$ is normal, the relative canonical bundle $\mathcal{K}_{\mathcal{X}/\Delta}$ is trivial, i.e. $\mathcal{K}_{\mathcal{X}/\Delta}\cong \mathcal{O}_{\mathcal{X}}$, and $\mathcal{X}\rightarrow \Delta$ has at worst log-canonical singularities.
     Let  $\omega_{t}$ be  the unique Ricci-flat  K\"{a}hler-Einstein   metric
presenting  $
c_{1}(\mathcal{L})|_{X_{t}}\in H^{1,1}(X_{t}, \mathbb{R})$, $t\in\Delta^{*}$.
 Then the  following statements are equivalent.
 \begin{itemize}
  \item[i)]  The origin $0\in \Delta$ is at finite  Weil-Petersson distance from $\Delta^{*}$.
   \item[ii)] The central fiber  $X_0$ is a Calabi-Yau variety.
    \item[iii)]  When $t\rightarrow 0$,    $$(X_{t}, \omega_{t})
\stackrel{d_{GH}}\longrightarrow (Y, d_{Y}),  $$ in the Gromov-Hausdorff sense, where    $(Y, d_{Y}) $ is a compact metric space.  \end{itemize}
 \end{theorem}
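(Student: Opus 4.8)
The plan is to read off Theorem \ref{thm4} from the two main theorems of this note: at this stage nothing substantially new needs to be proved, since the analytic and algebro-geometric work has already been front-loaded into Theorems \ref{thm1} and \ref{thm2}. Concretely, I would obtain the three-way equivalence by composing the equivalence ``i) $\Leftrightarrow$ iii)'' coming from Theorem \ref{thm1} with the equivalence ``ii) $\Leftrightarrow$ iii)'' coming from Theorem \ref{thm2}.

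For the first, the equivalence of i) and iii) here is exactly the equivalence of conditions iii) and ii) in Theorem \ref{thm1}, whose hypotheses -- a degeneration of polarized Calabi--Yau manifolds with $\omega_t$ the Ricci-flat K\"{a}hler--Einstein metric in $c_1(\mathcal L)|_{X_t}$ -- are satisfied verbatim. For the second, the equivalence of ii) and iii) here is the equivalence of conditions ii) and iii) in Theorem \ref{thm2}, whose additional standing hypotheses ($\mathcal X$ normal, $\mathcal K_{\mathcal X/\Delta}\cong\mathcal O_{\mathcal X}$, at worst log-canonical singularities) are precisely the ones assumed here. The only bookkeeping point is that Theorem \ref{thm2} is phrased with $\omega_t$ representing $c_1(\mathcal L^m)|_{X_t}$ for $\mathcal L^m$ relatively very ample, whereas here $\omega_t$ represents $c_1(\mathcal L)|_{X_t}$; but $c_1(\mathcal L^m)=m\,c_1(\mathcal L)$, so the two Ricci-flat metrics differ by the constant factor $m$, one is a rescaling of the other, and the existence and compactness of a Gromov--Hausdorff limit are insensitive to this (the limit metric spaces differ only by rescaling $d_Y$ by $\sqrt m$), while the condition ``$X_0$ is a Calabi--Yau variety'' does not involve the metric at all. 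Combining the two equivalences gives that i), ii), iii) are mutually equivalent. For completeness I would also note the cleanest direct route for one arrow: ii) $\Rightarrow$ i) is C.-L. Wang's original finiteness theorem (\cite{Wang1}, compare Proposition \ref{prop1}), and i) $\Rightarrow$ iii) is Theorem \ref{prop2}.

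So the proof I would write is essentially formal; the substance is hidden in the ingredients. The genuinely hard implication, buried inside Theorems \ref{thm1} and \ref{thm2}, is that a uniform diameter bound -- equivalently, finiteness of the Weil--Petersson distance, equivalently Gromov--Hausdorff convergence -- forces the central fiber to be filled in by a Calabi--Yau variety: this is the resolution of Wang's conjecture, due to \cite{To,Ta}, and rests on Mumford semistable reduction together with recent advances in the minimal model program (\cite{Fuj,NX}), on the Donaldson--Sun degeneration theory, and on the metric convergence results of \cite{RoZ1,RoZ2}. I would therefore present Theorem \ref{thm4} with a short proof of the above form, directing the reader to \cite{Wang1,Wang2,To,Ta,RoZ1} for those details.
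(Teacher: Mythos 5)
Your proposal is correct, and in fact it fills a gap the paper deliberately leaves open: Theorem \ref{thm4} is stated in the paper without proof, as a collection of earlier results with references to \cite{Wang1,Wang2,To,Ta,RoZ1}, so there is no in-paper argument to compare against line by line. Your derivation from Theorems \ref{thm1} and \ref{thm2} is legitimate and non-circular, since those theorems are proved in the paper directly from Proposition \ref{prop1}, Theorem \ref{prop2}, and the cited results of \cite{Ta,To,RoZ1,RoZ2}, never from Theorem \ref{thm4} itself; the implied chain of references behind the paper's statement (i) $\Leftrightarrow$ ii) via \cite{Wang1} and Theorem 1.3 of \cite{Ta}, ii) $\Rightarrow$ iii) via \cite{RoZ1,RoZ2}, iii) $\Rightarrow$ i) via the diameter bound and Theorem 1.4 of \cite{Ta}) is exactly what is repackaged inside Theorems \ref{thm1} and \ref{thm2}, so your route and the intended one have the same mathematical content. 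Your bookkeeping on the polarization is also right: the Ricci-flat metric in $c_{1}(\mathcal{L}^{m})|_{X_{t}}=m\,c_{1}(\mathcal{L})|_{X_{t}}$ is $m$ times the one in $c_{1}(\mathcal{L})|_{X_{t}}$ by uniqueness in each K\"ahler class (the paper itself uses $\omega_{s}'=m\omega_{t}$ in the proof of Theorem \ref{prop2}), and Gromov--Hausdorff convergence to a compact limit is invariant under rescaling by the fixed factor $\sqrt{m}$, while condition ii) is metric-independent; so the transfer between Theorem \ref{thm2} and Theorem \ref{thm4} is harmless. The only stylistic caveat is that, since Theorem \ref{thm4} is meant as a survey of prior results on which the note builds, one would normally present it with the citations rather than as a corollary of Theorems \ref{thm1} and \ref{thm2}; but as a proof of the stated equivalence your argument is complete.
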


\end{document}